\newtheorem{theorem}{Theorem}
\newtheorem*{thm}{Theorem}
\newtheorem{definition}[theorem]{Definition}
\newtheorem{temp-def}{Temporary definition}
\newtheorem{lemma}[theorem]{Lemma}
\newtheorem{lemma-def}[theorem]{Lemma-Definition}
\newtheorem{remark}[theorem]{Remark}
\def\proof{\smallskip\goodbreak{\it Proof.~~--~\kern.3em}
     \ignorespaces}%
\def\qedbox{$\square$}%
\def\qed{\ifmmode\qedbox\else\unskip\ \hglue0mm\hfill
     \qedbox\smallskip\goodbreak\fi}%
\newcommand{\CC}{\mathbb{C}}
\newcommand{\NN}{\mathbb{N}}
\newcommand{\Cc}{\mathscr{C}}
\newcommand{\Lc}{\mathscr{L}}
\newcommand{\RM}[1]{ \text{\sc \romannumeral #1 }}
\begin{document}
\title{Galois groupoid and confluence of difference equations}

\author{Guy Casale}
\address{Guy Casale, Univ Rennes, CNRS, IRMAR-UMR 6625, F-35000 Rennes, France 
}
\email{\tt guy.casale@univ-rennes1.fr}

\author{Damien Davy}
\address{Damien Davy, Univ Rennes, CNRS, IRMAR-UMR 6625, F-35000 Rennes, France
}
\email{\tt brandavy@outlook.fr}

\subjclass{12H05 34M55}

\keywords{ {(eng)} difference equations, continuous limit, Galois groupoid. {(fr)} \'equations aux diff\'erences, confluence, groupo\"ide de Galois. }

\maketitle

\begin{abstract}
In this article we compute Galois groupoid of discret Painlev\'e equations. Our main tool  is a semi-continuity theorem for the Galois groupoid in a confluence situation of a diffrence equation to a differential equation.
\end{abstract}

\section{Introduction}
The aim of this article is to determine Galois groupoids of some discrete dynamical systems $\Phi : M \dasharrow M$. The phase space $M$ is an algebraic varaiety and $\Phi$ is a rational dominant map.  Galois groupoid is the ``differential algebraic'' envelope of $\Phi$, it describes the richest algebraic geometric structure (see \cite{gromov}) invariant by $\Phi$.

For linear differential equation, E. Picard  \cite{picard}, E. Vessiot \cite{vessiot} and later E.R. Kolchin \cite{kolchin} developed a Galois theory. To an order $n$ linear differential equation with coefficient in the differential field $\left( \mathbb C(x), \frac{\partial}{\partial x} \right)$ is associated an algebraic sub-group of $\rm{GL}_n(\mathbb C)$.  This group measures algebraic relations between a basis of solutions and theirs derivatives. This theory interacted with many different areas of mathematics see \cite{bertrandBBK} or \cite{singerVDP}. A discrete analog was proposed by  Franke \cite{franke} for linear difference equations (see also \cite{qsingerVDP}).

In \cite{malgrange}, a similar object was defined by B. Magrange for foliations, including nonlinear differential equations. Malgrange called it the \textit{Galois groupoid} of the foliation. In the present article, we present an extension of this definition to different dynamical systems.
Galois groupoid of a rational dominant map is defined and first examples are studied in \cite{casalediscret} and \cite{granier}. Let us give a quick, unformal and non complete definition, (see section \ref{def} for the formalisation).

Let $B$ be a smooth irreducible algebraic variety with en rational dominant self map $\sigma : B \dasharrow B$. It corresponds to a difference field of rational functions $(\CC(B), \sigma^\ast)$. Let $M$ be a smooth irrreducible algebraic varierty with a projection $\pi : M \to B$ and $\Phi : M \dasharrow M$ a rational dominant lift of $\sigma$. The difference equation associated to these datas is the equation $y\circ \sigma = \Phi \circ y$ on local holomorphic sections of $\pi$, $y : U \to M$, $U \subset B$. Galois groupoid of $\Phi$ (or its difference equation) is denoted by $Mal(\Phi/B)$, roughly speaking it is the \emph{algebraic pseudogroup} of local holomorphic transformations of fibers of $\pi$ \emph{generated by restrictions of $\Phi$ to fibers}.  By considering all order $k$ Taylor expansions of all elements in $Mal(\Phi/B)$ at all theirs points of definition one gets a finite dimensional algebraic variety called $Mal_k(\Phi/B)$. The Galois groupoid has a natural structure of pro-algebraic variety.

\subsection{Confluence and specialisation}

When a dynamical system depends on parameters, we wonder how the Galois groupoid depends on these parameters.
For linear differential equation an answer was given by \cite{goldman}. A generalisation of these specialisation results including difference equation can be found by a Tannakian approach in \cite{andre}. As a byproduct, these theorems can be used to obtained bound on the Galois group of a differential equation given as the continuous limite of a difference equation. 

Let $B \dasharrow S$ be a rational map invariant by $\sigma$. We can choose a value $s \in S$ and considere the restriction of the dynamical system $\Phi_s : M_s \dasharrow M_s$ above $\sigma_s : B_s \dasharrow B_s$. The general opinion is that the dynamical behaviour of $\Phi_s$ for a particular value of $s\in S$ must be simpler that the behaviour on the generic fiber. This is the specialisation theorem.

\begin{thm}[\ref{specialisationthm} page \pageref{specialisationthm}] In the situation above, 
$\dim Mal_k(\Phi_s/B_s) \leq \dim_S Mal_k(\Phi/B)$ with equality for general $s\in S$.
\end{thm}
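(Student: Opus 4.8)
The plan is to derive the statement from a \emph{restriction lemma}, comparing $Mal_k(\Phi_s/B_s)$ with the fibre over $s$ of $Mal_k(\Phi/B)$, and then to control the special fibres by a flat--limit argument.

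\emph{Set-up over $S$.} Since $B\dasharrow S$ is $\sigma$-invariant, each fibre $B_s$ is $\sigma$-invariant and the construction of section~\ref{def} is organised over $S$: the jet bundle $\R_k$ containing $Mal_k(\Phi/B)$ is a variety over $S$, $Mal_k(\Phi/B)$ is a closed subvariety of it, and (because it contains the image of $M$ under the identity-jet section, and $M\to S$ is dominant) its structural morphism $\rho\colon Mal_k(\Phi/B)\to S$ is dominant; write $d=\dim_S Mal_k(\Phi/B)$ for the dimension of the generic fibre of $\rho$. For $n\in\ZZ$ let $\Gamma_n\subseteq\R_k$ be the image of the rational map $M\dasharrow\R_k$ sending $p$ to the $k$-jet at $p$ of $\Phi^n$ restricted to the fibre of $\pi$ through $p$; this map is defined over $S$, and $Mal_k(\Phi/B)$ is the $\mathcal D$-groupoid generated by $\overline{\bigcup_n\Gamma_n}$. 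Since $\Phi_s^{\,n}=(\Phi^n)_s$ and the fibres of $\pi_s$ are exactly the fibres of $\pi$ over $B_s$, the generators of $Mal_k(\Phi_s/B_s)$ are the restrictions $(\Gamma_n)_s=\Gamma_n\cap(\R_k)_s$.

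\emph{Restriction lemma.} For every $s$, $Mal_k(\Phi_s/B_s)\subseteq Mal_k(\Phi/B)_s$ inside $(\R_k)_s$: the $(\Gamma_n)_s$ all lie in $Mal_k(\Phi/B)_s$, hence so does $\overline{\bigcup_n(\Gamma_n)_s}$, and as $Mal_k(\Phi/B)_s$ is the restriction to the $\sigma_s$-invariant base $B_s$ of a $\mathcal D$-groupoid over $B$ it is itself a $\Phi_s$-invariant $\mathcal D$-groupoid over $B_s$, so it contains the $\mathcal D$-groupoid these generators generate, namely $Mal_k(\Phi_s/B_s)$. Moreover, for $s$ in a dense open $S_0\subseteq S$ equality holds: by Noetherianity (Ritt--Raudenbush coherence of the defining differential ideal) the construction of $Mal_k(\Phi/B)$ from $\Phi$ amounts to finitely many geometric operations -- images, intersections, Zariski closures -- with data having coefficients in $\CC(B)$; each of these commutes with restriction to a general point of $S$, so spreading out over $S_0$ and specialising at $s\in S_0$ reproduces exactly the construction defining $Mal_k(\Phi_s/B_s)$.

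\emph{Dimension count.} For general $s$, $\dim Mal_k(\Phi_s/B_s)=\dim Mal_k(\Phi/B)_s=d$, by the restriction lemma and because the fibre of $\rho$ over a general point has dimension $d$; this proves the equality clause. For an \emph{arbitrary} $s_0\in S$ the restriction lemma only gives $Mal_k(\Phi_{s_0}/B_{s_0})\subseteq Mal_k(\Phi/B)_{s_0}$, whose dimension may exceed $d$ by upper semicontinuity of fibre dimension, so I pass to a flat limit. Choose a smooth affine curve $C$ with a morphism $C\to S$, a point $c_0\mapsto s_0$, and $U:=C\setminus\{c_0\}$ (shrunk if necessary to remove finitely many more points) mapped into the open locus where $\rho$ is flat with $d$-dimensional fibres; pull the situation back to $C$, put $\G_C=Mal_k(\Phi_C/B_C)$, and let $\Z=\overline{\,\G_C|_{U}\,}\subseteq(\R_k)_C$ with its reduced structure. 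Every component of $\Z$ dominates $C$, so $\Z\to C$ is flat (torsion-free over the Dedekind ring $\O_C$) and all its fibres, in particular $\Z_{c_0}$, have dimension $d$. On the other hand, for each $n$ the $C$-family $\overline{\Gamma_n\text{-over-}C}$ is contained in $\G_C$, hence over $U$ in $\Z$, hence (taking Zariski closure and then the fibre over $c_0$) $\Gamma_n^{(s_0)}\subseteq\Z_{c_0}$; therefore $\overline{\bigcup_n\Gamma_n^{(s_0)}}\subseteq\Z_{c_0}$ and $\dim Mal_k(\Phi_{s_0}/B_{s_0})\le\dim\Z_{c_0}=d$.

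\emph{Main obstacle.} The delicate step is reconciling the flat limit $\Z_{c_0}$ with the $\mathcal D$-groupoid structure: a priori $\Z_{c_0}$ need not itself be a $\mathcal D$-groupoid (the equality-of-expected-dimension conditions on prolongations are open, not closed), and $Mal_k(\Phi_{s_0}/B_{s_0})$ might be strictly larger than the plain Zariski closure $\overline{\bigcup_n\Gamma_n^{(s_0)}}$ because of the saturation into a $\mathcal D$-groupoid. I expect to close this either by invoking Malgrange's theorem that the Zariski closure of a pseudogroup is already a $\mathcal D$-groupoid -- so $Mal_k(\Phi_{s_0}/B_{s_0})=\overline{\bigcup_n\Gamma_n^{(s_0)}}$ and the inclusion above is exactly what is needed -- or by showing directly that the $\mathcal D$-saturation of $\overline{\bigcup_n\Gamma_n^{(s_0)}}$ still lands in $\Z_{c_0}$, using that $\Z$ restricts to a family of $\mathcal D$-groupoids over $U$ and that the operations building the saturation are compatible with flat specialisation. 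The companion technical point is the finiteness input of the restriction lemma, that $Mal_k$ is produced by finitely many operations depending algebraically on $s$, which rests on the $S$-relative coherence of the defining differential ideal.
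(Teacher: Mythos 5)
Your route is genuinely different from the paper's: the paper proves both clauses at once by comparing Hilbert polynomials of orbit closures in $R_k(M/B)$ (the degree $\leq d$ part of the ideal of $\overline{\mathcal O(j_kr)}$ is cut out by linear conditions depending algebraically on $j_kr$, so the Hilbert function is generically minimal and the orbit-closure dimension can only drop at special frames, following Amerik--Campana/Bonnet), whereas you argue by restriction plus a flat limit over a curve through $s_0$. Your limit argument for the inequality is workable in principle (flatness is not even needed: fibre dimension for a variety dominating a curve suffices), but it stands or falls on the point you yourself flag: the identification of $Mal_k(\Phi_{s_0}/B_{s_0})$ with $\overline{\bigcup_n\Gamma_n^{(s_0)}}$. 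With the definition used in this paper (fibre product over a model of the field of differential invariants), that identification is exactly the Lemma proved just before the theorem (``the Galois groupoid is the Zariski closure of the Taylor expansions of the iterates''), and its proof rests on the Amerik--Campana theorem that general fibres of the invariant map are orbit closures. Malgrange's theorem that the Zariski closure of a pseudogroup is a $\mathcal D$-groupoid does not address this: the issue is not whether a closure is a $\mathcal D$-groupoid, but whether the invariant fibration has orbit closures as general fibres. So your proposed fix is the wrong tool, and without the Lemma (or an equivalent) the key step $\dim Mal_k(\Phi_{s_0}/B_{s_0})\leq\dim\overline{\bigcup_n\Gamma_n^{(s_0)}}$ is unsupported.

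The second, more serious gap is the equality clause. Your justification --- that $Mal_k(\Phi/B)$ is built by ``finitely many geometric operations with coefficients in $\CC(B)$, each commuting with restriction to a general point of $S$'' --- assumes precisely what has to be proved: the field $Inv(R_k\Phi)$ is defined by infinitely many conditions $H\circ R_k\Phi^{\circ n}=H$, Noetherianity gives no bound uniform in $s$, and commutation of the construction with specialisation is the whole content of the theorem. Moreover you claim equality on a dense Zariski-open $S_0$, which is stronger than what is true in the paper's sense of ``general'' (complement of a countable union of proper subvarieties): in the paper the locus of frames whose orbit closure has the generic Hilbert polynomial is only a countable intersection of opens, one for each degree, and this is exactly where the countable-general statement comes from. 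As written, your proof of the equality for general $s$ is a restatement of the claim rather than an argument; some semicontinuity mechanism of the paper's kind (or another genuine argument that general fibres over general $s$ realize the generic orbit-closure dimension) is needed.
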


A property is true for a general $s \in S$ if it is true for $s$ out of a countable union of proper algebraic subvarieties of $S$.

The second theorem concerns vector fields obtained as continuous limits of discrete dynamical systems.  This means that $S$ is one dimensional  and near a special point $s_0\in S$, one can write $\Phi_s = Id + (s-s_0)X +o(s-s_0)$ ; $Id$ is the identity map on $M_{s_0}$ and $X$ is a rational vector field on $M_{s_0}$.  Again we will prove that the dynamical behaviour of $X$ is simpler that the behaviour of $\Phi$. This is the confluence theorem.
  
\begin{thm}[\ref{confluencethm} page \pageref{confluencethm}] In the situation above, 
$\dim Mal_k(X/B_{s_0}) \leq \dim_S Mal_k(\Phi/B)$.
\end{thm}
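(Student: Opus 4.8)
The plan is to realise the local flow of $X$ on $M_{s_0}$ as a limit of iterates of $\Phi_s$ as $s\to s_0$, to place those limits inside the fibre over $s_0$ of the $S$-dominating part of $Mal_k(\Phi/B)$, and then to exploit that a morphism onto the one-dimensional base $S$ cannot jump in fibre dimension along its dominant components, while the specialisation theorem identifies the generic fibre dimension with $\dim_S Mal_k(\Phi/B)$.

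First I would pick a local coordinate $\epsilon$ on $S$ centred at $s_0$ and trivialise $M$ and $B$ near $s_0$, so that $\Phi_{s_0}=Id$, $\sigma_{s_0}=Id$, $\Phi_{s_0+\epsilon}=Id+\epsilon X+o(\epsilon)$ and $\sigma_{s_0+\epsilon}=Id+\epsilon\xi+o(\epsilon)$, where $\xi$ is the vector field on $B_{s_0}$ that $X$ lifts. The analytic core is the classical Euler (or Lie) approximation of a flow by a discretisation: at any base point, and for any $t$ in the interval of existence of the flow of $X$, the order-$k$ jet of the $n$-th iterate $\Phi_{s_0+\epsilon}^{[n]}$ converges, when $\epsilon\to0$ and $n\epsilon\to t$, to the order-$k$ jet of $\exp(tX)$. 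Indeed $\Phi_{s_0+\epsilon}$ is algebraic in $\epsilon$ and smooth in the space variables, so composing $n$ copies of a map of the form $Id+\epsilon X+O(\epsilon^2)$ yields precisely the time-$n\epsilon$ Euler scheme of $X$, whose $C^k$ limit is $\exp(tX)$; since $\Phi$ lifts $\sigma$ and $\sigma_{s_0+\epsilon}^{[n]}$ likewise converges to $\exp(t\xi)$, all of this takes place inside $\mathrm{Aut}_k(M/B)$.

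Next, let $Z$ be the union of the irreducible components of $Mal_k(\Phi/B)$ that dominate $S$; then $\dim_S Z=\dim_S Mal_k(\Phi/B)$, and for a general $s\in S$ one has $Z_s=(Mal_k(\Phi/B))_s\supseteq Mal_k(\Phi_s/B_s)$ with $\dim Mal_k(\Phi_s/B_s)=\dim_S Mal_k(\Phi/B)$ by the equality case of the specialisation theorem. Since $Mal_k(\Phi_s/B_s)$ is a groupoid containing $\Phi_s$, it contains every jet $j_k(\Phi_s^{[n]})$, so $j_k(\Phi_s^{[n]})\in Z$ for general $s$; letting $s\to s_0$ along general values and using the convergence above gives $j_k(\exp(tX))\in Z_{s_0}$ for every $t$. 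Now $Z_{s_0}$ is a limit of the $\mathcal{D}$-groupoids $Mal_k(\Phi_s/B_s)$; as the groupoid identities and the prolongation and formal integrability conditions are Zariski-closed, $Z_{s_0}$ is the order-$k$ truncation of a $\mathcal{D}$-groupoid on $M_{s_0}/B_{s_0}$. Being a $\mathcal{D}$-groupoid containing all the flows of $X$, it contains the smallest such one, which is $Mal_k(X/B_{s_0})$. Finally, since $S$ is a smooth curve and every component of $Z$ dominates it, the morphism $Z\to S$ is flat, so $\dim Z_{s_0}=\dim_S Z=\dim_S Mal_k(\Phi/B)$; combining, $\dim Mal_k(X/B_{s_0})\le\dim Z_{s_0}=\dim_S Mal_k(\Phi/B)$.

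The hard part is the junction between the second and third paragraphs: making the convergence $j_k(\Phi_{s_0+\epsilon}^{[n]})\to j_k(\exp(tX))$ precise and uniform with $n\to\infty$ and $\epsilon\to0$ simultaneously, controlling all fibrewise derivatives up to order $k$, and then genuinely identifying the limit locus $Z_{s_0}$ as the order-$k$ part of a $\mathcal{D}$-groupoid, so that the minimality characterisation of $Mal_k(X/B_{s_0})$ may be applied. Once this is in place, the remaining steps — reduction to the $S$-dominating components, flatness over the curve $S$, absence of a fibre-dimension jump, and the appeal to the specialisation theorem for the generic fibre — are routine.
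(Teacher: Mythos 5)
Your argument stands or falls at exactly the junction you flag, and the mechanism you propose there does not work as stated. To get $Mal_k(X/B_{s_0})\subseteq Z_{s_0}$ you (i) claim that $Z_{s_0}$ is the order-$k$ truncation of a $\mathcal{D}$-groupoid because the groupoid and integrability conditions are Zariski-closed, and (ii) invoke a characterisation of $Mal_k(X/B_{s_0})$ as the smallest such object containing the flows of $X$. Point (i) fails in the generality you use it: a special fibre of a family whose general fibres are groupoids need not be a groupoid, since composability does not specialise (a composable pair over $s_0$ need not be a limit of composable pairs over nearby $s$), and in this paper $Mal_k$ is only a groupoid after restriction to a Zariski open subset of $M$ in any case. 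Point (ii) is not the definition used here: $Mal(X/B_{s_0})$ is defined through the field of rational differential invariants of $RX$, so the minimality property you appeal to is precisely what would have to be proved. The correct patch is the vector-field analogue of the paper's lemma that the Galois groupoid is the Zariski closure of the Taylor expansions of the iterates, namely that $Mal_k(X/B_{s_0})$ is the Zariski closure of the fibrewise $k$-jets of the local flow maps $\exp(tX)$ (same Amerik--Campana/Bonnet-type orbit argument, applied to $R_kX$). With that lemma the $\mathcal{D}$-groupoid detour disappears: each $j_k(\exp(tX))$ is an analytic limit of jets $j_k(\Phi_s^{\circ n})$, which lie on irreducible subvarieties of $Mal_k(\Phi/B)$ dominating $S$ (so you need neither the equality case of the specialisation theorem nor the identification $Z_s=Mal_k(\Phi_s/B_s)$), hence in the closed set $Z$; your Krull/flatness bound over the curve $S$, of which only $\dim Z_{s_0}\leq\dim_S Z$ is needed, then finishes. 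You would still owe a genuine proof of the convergence of $k$-jets of iterates to the $k$-jet of the flow; the efficient route is to note that prolongation gives $R_k\Phi_s=Id+(s-s_0)R_kX+o(s-s_0)$ on $R_k(M/B)$, so the $C^k$ statement reduces to $C^0$ Euler-scheme convergence for $R_kX$ on compacts away from the indeterminacy loci of the rational maps involved.

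For contrast, the paper's proof never takes limits of iterates and needs neither flatness nor any groupoid structure on the special fibre: it works entirely with invariants. From $R\Phi_s=Id+(s-s_0)RX+o(s-s_0)$, the restriction $H^0$ at $s=s_0$ of any rational invariant $H$ of $R\Phi$ satisfies $RX\cdot H^0=0$, so restricted invariants of $\Phi$ are invariants of $X$ and $Mal_k(X/B_{s_0})$ is contained in the locus they cut out in the fibre at $s_0$; the dimension bound then becomes a transcendence-degree statement, proved by choosing a transcendence basis $H_1,\dots,H_p$ of $Inv(R_k\Phi)$ over $\CC(s)$ for which $dH_1\wedge\cdots\wedge dH_p$ has minimal vanishing order at $s_0$ and showing, via the minimal-polynomial trick, that the restrictions $H_1^0,\dots,H_p^0$ stay algebraically independent. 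Your route, once repaired as above, is more dynamical and perhaps more intuitive (the flow really is a limit of iterates), but it costs the analytic convergence lemma and the flow-closure lemma; the paper's route is shorter and purely algebraic.
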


\subsection{The main example}

A discrete Painlev\'e equation is a birational self-map $\Phi$ of $\CC^2 \times \Cc$ fibered above an automorphism without periodic points of the curve $\Cc$ of the independent variable $n$. This curve may be non compact. Formally the difference equation of invariant curves paramaterized by $n$ is the difference equation. Following \cite{grammaticos}, two properties are imposed to be called ``discrete-Painlev\'e equation'' :
\begin{enumerate}
\item It must have the singularities confinement property. There exists a fiberwise extension $M$ of $\CC^2 \times \Cc\to \Cc$ such that $\Phi$ can be extended as a biholomorphism  of $M$.
\item It must degenerate on a differential Painlev\'e equation. There exists a deformation $\widetilde{M}$ of $M$ above $(\CC, 0)$ and a deformation $\widetilde{\Phi}$ of $\Phi$ such that for $\epsilon \in (\CC^\ast,0)$  the couple $(\widetilde{M}_\epsilon, \widetilde{\Phi}_\epsilon)$ is birrational to $ (M,\Phi)$ and at $0$ one has a Taylor expansion $\widetilde{\Phi}_\epsilon = Id + \epsilon X +o(\epsilon)$ with $X$ a vector field on $\widetilde{M}_0$ whose trajectories parameterized by open set in $\Cc_0$ are Painlev\'e transcendents. This vector field is called a continuous limit of $\Phi$.    
\end{enumerate}
In \cite{sakai}, H. Sakai achieved the classification of total spaces $M$ of definition of discrete Painlev\'e equations. For a overview on difference Painlev\'e equations the reader may see \cite{noumi}. Let us describe the easiest non trivial example.

\subsubsection{The discrete Painlev\'e 2 equation}
Considere the rational dominant map :
$$
\begin{array}{rrcl}
\Phi_{\RM{2}}: &\CC^6 &\dasharrow & \CC^6 \\
                & \begin{bmatrix}n\\x\\y\\a\\b\\c\end{bmatrix} & \mapsto &  \begin{bmatrix}n+1\\-y + \frac{(a+bn)x +c}{1-x^2}\\x\\a\\b\\c\end{bmatrix}
\end{array}.
$$

Invariant analytic curves parameterized by $n$ are solutions of the so-called discrete Painlev\'e two equation :
$$
dP_{\RM{2}}(a,b,c) : x(n+1) + x(n-1) =  \frac{(a+bn)x(n) +c}{1-x(n)^2}.
$$
This equation degenerates on the second Painlev\'e equation. For $\epsilon \in \CC^\ast$, the change of variable $t= n\epsilon$, $f = y/\epsilon$, $g =(x-y)/\epsilon^2$, $\alpha = (a-2)/\epsilon^4$, $\beta =(b-\epsilon^2)/\epsilon^4$ and $\gamma = c/\epsilon^3$ can be used to get a trivial family $\Phi_{II}(\epsilon)$ of rational dominant maps above $\CC^\ast$ with a special degenerated fiber at $0$. A direct computation gives the expression of $\Phi_{\RM{2}}$ in these new coordinates
$$
\begin{array}{rrcl}
\Phi_{\RM{2}}(\epsilon): &\CC^6 &\dasharrow & \CC^6 \\
                & \begin{bmatrix}t\\g\\f\\ \alpha \\ \beta \\ \gamma\end{bmatrix} & \mapsto &   \begin{bmatrix}t\\g\\f\\ \alpha \\ \beta \\ \gamma\end{bmatrix} + \epsilon \begin{bmatrix} 1\\2f^3 + tf +\gamma\\ g \\ 0\\ 0\\ 0 \end{bmatrix} +o(\epsilon)
\end{array}.
$$
from which one gets the confluence of the family $\Phi_{\RM{2}}(\alpha,\beta,\gamma,\epsilon)$ on the vector field $ X_\gamma = \frac{\partial}{\partial t} +  g\frac{\partial}{\partial f} +(2f^3 + tf +\gamma) \frac{\partial}{\partial g}$ whose trajectories parameterized by $t$ are solutions of the second Painlev\'e equation $P_{\RM{2}}(\gamma)$ : $\frac{d^2f}{dt^2} = tf +2f^3+ \gamma$.

\subsubsection{The second Painlev\'e equations}

From \cite{casaleweil}, we know the Galois groupoid of $X_0$ and from \cite{davy} we know that that Galois groupoid of $X_\gamma$ over $\CC$ is the pseudogroup of invariance of the times form $dt$ and the closed 2-form $\iota_{X_\gamma}dt \wedge df \wedge dg$ for very general values of $\gamma$. 
The main theorems, {\bf Theorem \ref{specialisationthm} }and {\bf Theorem \ref{confluencethm}}, can be used to determine for very general values of $(a,b,c)$ the Galois groupoid of $\Phi_{\RM{2}}(a,b,c)$ over $\CC(n)$: 

\begin{thm}[\ref{painlevéthm} page \pageref{painlevéthm}]
 \label{gros}
 $$
Mal(\Phi_{\RM{2}}(a,b,c)/\CC(n))= \{ \varphi : (\CC^2,p_1) \to (\CC^2,p_2)\  |\  p_1,p_2 \in \CC^3,  \varphi^\ast dx\wedge dy = dx\wedge dy\}
$$
\end{thm}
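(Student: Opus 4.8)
The plan is to squeeze $Mal(\Phi_{\RM{2}}(a,b,c)/\CC(n))$ between two copies of the pseudogroup $\G$ on the right-hand side of the statement: the inclusion $Mal(\Phi_{\RM{2}}(a,b,c)/\CC(n))\subseteq\G$ will come from a direct Jacobian computation, and the reverse dimension estimate from Theorems~\ref{confluencethm} and \ref{specialisationthm} applied to the explicit confluence computed above, together with the Galois groupoid of the second Painlev\'e equation from \cite{casaleweil} and \cite{davy}. For the inclusion I would note that the fibrewise part $(x,y)\mapsto\bigl(-y+\tfrac{(a+bn)x+c}{1-x^2},\,x\bigr)$ of $\Phi_{\RM{2}}$ has Jacobian determinant identically $1$ and leaves $(a,b,c)$ fixed; hence $\Phi_{\RM{2}}$, all its iterates, and the algebraic pseudogroup they generate lie inside $\G$, the algebraic pseudogroup of local transformations that preserve $dx\wedge dy$ and move $(a,b,c)$ freely. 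By minimality of the Galois groupoid, $Mal(\Phi_{\RM{2}}(a,b,c)/\CC(n))\subseteq\G$, so $\dim Mal_k(\Phi_{\RM{2}}(a,b,c)/\CC(n))\le\dim\G_k$ for every $k$ and every $(a,b,c)$.

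For the reverse inequality I would feed the explicit confluence above into Theorems~\ref{confluencethm} and \ref{specialisationthm}. The second Painlev\'e vector field $X_\gamma=\partial_t+g\,\partial_f+(2f^3+tf+\gamma)\,\partial_g$ is the continuous limit of the family $\Phi_{\RM{2}}(\epsilon)$, so Theorem~\ref{confluencethm} gives $\dim Mal_k(X_\gamma/B_0)\le\dim_S Mal_k(\Phi_{\RM{2}}(\epsilon)/B)$. Since for $\epsilon\neq0$ the rescaling $t=n\epsilon,\ f=y/\epsilon,\ g=(x-y)/\epsilon^2,\dots$ conjugates $\Phi_{\RM{2}}(\epsilon)$ birationally to $\Phi_{\RM{2}}(a,b,c)$ and the Galois groupoid is a birational invariant, Theorem~\ref{specialisationthm} (specialising first the confluence parameter $\epsilon$ and then $(a,b,c)$) identifies the generic value of the right-hand side with $\dim Mal_k(\Phi_{\RM{2}}(a,b,c)/\CC(n))$ for very general $(a,b,c)$. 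Hence $\dim Mal_k(\Phi_{\RM{2}}(a,b,c)/\CC(n))\ge\dim Mal_k(X_\gamma/B_0)$ for very general $(a,b,c)$.

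It then remains to compute $\dim Mal_k(X_\gamma/B_0)$. By \cite{casaleweil} for $\gamma=0$ and \cite{davy} for very general $\gamma$, the Galois groupoid of $X_\gamma$ is the pseudogroup of invariance of $dt$ and of the closed 2-form $\iota_{X_\gamma}(dt\wedge df\wedge dg)$. Passing to the base $B_0$ removes the $\partial_t$-direction and restricts this 2-form on the fibres $\CC^2$ to the area form $df\wedge dg$, which the rescaling identifies with $dx\wedge dy$, while no condition is imposed on the remaining parameters; hence $Mal(X_\gamma/B_0)$ is the area-preserving pseudogroup of the fibres, and in particular $\dim Mal_k(X_\gamma/B_0)=\dim\G_k$. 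Combining the three estimates, for very general $(a,b,c)$ one gets $\dim Mal_k(\Phi_{\RM{2}}(a,b,c)/\CC(n))=\dim\G_k$ for all $k$; since $\G_k$ is irreducible and $Mal_k(\Phi_{\RM{2}}(a,b,c)/\CC(n))$ is a Zariski-closed subvariety of it of the same dimension, the two coincide, and passing to the limit over $k$ yields $Mal(\Phi_{\RM{2}}(a,b,c)/\CC(n))=\G$. Each step has excluded only a countable union of proper subvarieties of the $(a,b,c)$-space, so the conclusion holds for very general $(a,b,c)$.

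I expect the main difficulty to be the third step together with the bookkeeping it entails: descending the description of \cite{davy}, stated for the Galois groupoid of $X_\gamma$ ``over $\CC$'', to the relative groupoid over $B_0$, checking that the rescaling really identifies $\iota_{X_\gamma}(dt\wedge df\wedge dg)$ with $dx\wedge dy$ on the fibres and that the parameters stay unconstrained, and arranging the bases so that Theorem~\ref{confluencethm} is applied with a genuinely one-dimensional $S$ and so that the dimensions lost at each specialisation match up. Checking that the genericity conditions of the second step --- general $\epsilon_0$, general $(a,b,c)$ --- combine to leave $(a,b,c)$ very general also requires a short verification.
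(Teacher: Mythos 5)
Your overall squeeze is the same as the paper's: the upper bound comes from the invariant fibrewise area form (the paper quotes Sakai's invariant relative 2-form rather than the explicit Jacobian, but this is the same inclusion $Mal\subset Inv(\omega)$), and the lower bound comes from Theorems \ref{confluencethm} and \ref{specialisationthm} applied to the explicit confluence to $P_{\RM{2}}$, whose Galois groupoid is taken from \cite{casaleweil} and \cite{davy}. Where you genuinely diverge is in the endgame. The paper never tries to prove exact equality of jet dimensions at each order $k$: it only extracts from the specialisation/confluence inequalities that the diagonal part $M_{n,n}$ of the Galois groupoid has \emph{quadratic growth} of $\dim(M_{n,n})_k$ in $k$, and then invokes Cartan's classification \cite{cartan} of algebraic pseudogroups on $2$-dimensional manifolds, for which the only subgroupoid of $Inv(\omega_n)$ with quadratic growth is $Inv(\omega_n)$ itself; the off-diagonal part is then recovered because both groupoids dominate $\Cc\times\Cc$ and agree over the diagonal. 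Your route instead needs three sharper ingredients: (1) the exact identification $\dim Mal_k(X_\gamma/B_0)=\dim\G_k$ for every $k$, (2) exact matching of the constants lost in the two successive specialisations (relative versus absolute dimensions over the $\epsilon$-line and over the $(a,b,c)$-space), and (3) irreducibility of each $\G_k$. The benefit of the Cartan argument is precisely that a growth-rate estimate suffices, so it is immune to the bookkeeping you yourself flag as delicate.

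The one point I would call a genuine gap as written is your step ``passing to the base $B_0$ \ldots\ hence $Mal(X_\gamma/B_0)$ is the area-preserving pseudogroup of the fibres.'' The results you cite describe the Galois groupoid of $X_\gamma$ \emph{over $\CC$} (base a point) as the invariance pseudogroup of $dt$ and $\iota_{X_\gamma}(dt\wedge df\wedge dg)$; it does not follow formally from this statement that the \emph{transversal} groupoid $Mal(X_\gamma/B_0)$, defined through the differential invariants on the relative frame bundle $R(M/B)$, has no further invariants and equals the full fibrewise symplectic pseudogroup, jet dimension for jet dimension. One needs a comparison between the absolute and fibered Galois groupoids (this is the content of the companion results of \cite{davy}/\cite{casaledavy} alluded to in Section \ref{def}); since your conclusion rests on an exact dimension count, this descent must be carried out precisely, whereas the paper only needs the (much coarser) quadratic lower bound on growth before handing the argument to Cartan's classification. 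If you supply that comparison, and the irreducibility of $\G_k$, your argument closes; otherwise I would recommend replacing the ``same dimension in an irreducible $\G_k$'' step by the growth-plus-Cartan argument, which is what the paper actually does.
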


\subsection{Galois groupoid and irreducibility}

The notion of irreducibility of a differential equation is as old as differential equations. It is formalized by K. Nishioka \cite{nishiokaP1} and Umemura \cite{umemurairred1,umemurairred2} following original ideas from Painlev\'e's Stochkolm Lessons \cite{painleve}. A discrete version of irreducibility can be found in \cite{nishiokadecomposable}. Let us recall a weaker version of the definition.
\begin{definition}
A rational second order difference equation $E : x(n+2) = F(n,x(n),x(n+1))$ is reducible if there exist a tower of field extension 
$$
\CC(n) =K_0 \subset K_1 \ldots \subset K_N 
$$ 
such that
\begin{enumerate}
\item each intermediate extension $K_{i-1} \subset K_i$ is of one of the following type
\begin{enumerate}
\item algebraic,
\item generated by entries of a fundamental solution of a linear system with coeffiecients in $K_{i-1}$,
\item generated by a solution of a first order non linear difference equation with coefficients in $K_{i-1}$,
\end{enumerate} 

\item there exists a solution $x \in K_N$ with $n, x(n), x(n+1) $ algebraically independent in $K_N$.
\end{enumerate} 
\end{definition}

From numerous papers by S. Nishioka \cite{nishiokaD7,nishiokaA7, nishiokaA6}, we know the irreducibility of many discrete Painlev\'e equation. The proofs are done by direct and very specific computation.

The aim of our work is to provide a new proof of irreducibility of most of the discrete Painlev\'e equations based on the computation of theirs Galois groupoids using the confluence of such discrete equations on differential Painlev\'e equations. Such a proof should prove ireducibility of any discrete dynamical system which admit as contiuous limite second order equation with a big enough Galois groupoid (such as \ref{gros}). This seems to be done using theorems of this article together with discrete analog of results from \cite{casaleirred}. This will be done in future work.

\section{Fibered dynamical systems and transversal Galois groupoid}
\label{def}

When a dynamical system $\Phi : M \dasharrow M$ comes from a difference equation, it preserves the foliation by fibers of the map $M \to B$ given by the independant coordinates. Moreover in the case with parameters, $\Phi$ preserves the fibers of the map given by the parameters.  Let  $M$ be the phase space, $B$ the space of the independant variable and parameters and $S$ the parameters space. We have a commutative diagram
$$ 
\begin{array}{ccc}
M &\overset{\Phi}{\dasharrow} &M \\
\downarrow & & \downarrow\\
B &\overset{\sigma}{\dasharrow} &B \\
\downarrow & & \downarrow\\
S & = & S
\end{array}$$
where $\sigma$ is the map corresponding to the operator involved in the difference equation.

\subsection{The fibered frames bundle}
Let $q$ be the dimension of fibers of $M$ over $B$. The space of frames on fibers of $M \to B$ is
$$R(M/B) = \{r : (\CC^q,0) \to M_b \ | \ b \in B\text{ and }\det(Jr) \not = 0 \}$$
where $Jr$ is the Jacobian matrix of $r$ at $0$.
Its coordinates ring of $R(M/B)$ is
$$\left(Sym(\CC[M]\otimes \CC[\partial_1,\ldots,\partial_q]) / \Lc \right) [1/jac]$$ where 

\begin{itemize}
  \item the tensor product is a tensor product of $\CC$-vector spaces;
  \item $Sym( V )$ is the $\CC$-algebra generated by the vector space $V$;
  \item $Sym(\CC[M]\otimes \CC[\partial_1,\ldots, \partial_q])$ has a structure of $\CC[\partial_1,\ldots,\partial_q]$-differential algebra {\it via} 
          the right composition of differential operators;
  \item the Leibniz ideal $\Lc$ is the $\CC[\partial_1,\ldots, \partial_q]$-ideal generated by $fg\otimes 1 - (f\otimes1)(g\otimes1)$ for all $(f,g) \in \CC[M]^2$, $h\otimes \partial_i$ for $h\in \CC[B]$ and $1\leq i \leq q$ and $1\otimes 1 - 1$;
  \item the quotient is then localized by $jac$ the sheaf of ideals (not differential !) generated by $\det\left(  [x_i \otimes \partial_j]_{i,j}\right)$
          for a transcendental basis $(x_1, \ldots, x_q)$ of $\CC(M)$ over $\CC(B)$ on Zariski open subset of $M$ where such a basis is defined. 
 \end{itemize}

Local coordinates $(b_1, \ldots b_p, x_1, \ldots, x_q)$ on $M$ such that the projection on $B$ is the projection on $b$'s coordinates, induce local coordinates on $R(M/B)$ {\it via} the Taylor expansion of maps $r$ at $0$ :
$$
r(\epsilon_1\ldots, \epsilon_q) = \left(b^0_1,\ldots b^0_p, \sum r_1^{\alpha} \frac{\epsilon^\alpha}{\alpha!}, \ldots,\sum r_q^{\alpha} \frac{\epsilon^\alpha}{\alpha!}   \right).
$$ 
One denotes $x_i^{\alpha} : R(M/B) \to \CC$ the function defined by $x_i^{\alpha}(r) = r_i^{\alpha}$. 
This function is the element $x_i\otimes \partial^\alpha$ in 
$\CC[R(M/B)]$. 
\begin{enumerate}
\item The action of $\partial_j : \CC[R(M/B)] \to \CC[R(M/B)
]$ can be written in
local coordinates and gives the total derivative operator $\sum_{i,\alpha} x_i^{\alpha + 1_j} \frac{\partial}{\partial x_i^{\alpha}}$ where $1_j$ is the multiindex whose only non zero entry is the $j^{th}$ and its values is 1.
\item The vector space $\CC[\partial_1,\ldots,\partial_q]$ is filtered by $\CC[\partial_1,\ldots,\partial_q]^{\leq k}$ the spaces of operators of order less than $k$. This gives a
filtration of $\CC[R(M/B)]$ by  $\CC$-algebras of finite type. 
\item These algebras are coordinate ring of the space of $k$-jet of frames $R_k(M/B) =\{j_k r \ |\ r \in R(M/B)\}$.
\item The action of $\partial_1,\ldots,\partial_q$ have degree $+1$ with respect to the filtration.

\end{enumerate}

\subsection{Prolongation of dominant morphism and vector fields}

 Morphisms (resp. derivations) from $\CC[M]$ to $\CC[M]$ with a non zero Jacobian determinant and  preserving $\CC[B]$ act on $\CC[R(M/B)]$ as morphisms (resp. derivations).
 
  If the map $\Phi$ is regular and induces $\Phi^\ast :\CC[M] \to \CC[M] $. Its action of the frame bundle is defined by $Sym(\Phi \otimes 1)$ on $Sym(\CC[M]\otimes \CC[\partial_1,\ldots, \partial_q])$, it can be easily checked that the Leibniz ideal is preserved. The induced map $\CC[R(M/B)] \to \CC[R(M/B)]$ corresponds to a endomorphism of $R(M/B)$ denoted by $R\Phi$ and is called the prolongation of the morphism. The prolongation of a derivation of $\CC[M]$ preserving $\CC[B]$ is a derivation of $\CC[R(M/B)]$ defined in the same way.

If the map $\Phi$ is rational and dominant with domain of definition $M^\circ$, its prolongation on $R(M^\circ/B)$ is a rational dominant map on $R(M/B)$. Rational vector fields on $M$ can be prolonged on $R(M/B)$ in a similar way.\\

  Prolongations of morphisms and derivations of $\CC[M]$ on $\CC[R(M/B)]$ have degree $0$ with respect to the filtration defined above. Prolongations commute with the differential structure. When $X$ is a rational vector field on $M$ preserving $M \to B$, its prolongation $RX$ on $R(M/B)$ can be computed with a explicit formula :
  
Let $X = \sum c_i(b)\frac{\partial}{\partial b_i} + \sum a_i(x,b)\frac{\partial}{\partial x_i}$ be a vector field preserving $M\to B$ in local coordinates. One gets 
$RX = \sum c_i(b)\frac{\partial}{\partial b_i} + \sum_{i,\alpha} \partial^\alpha(a_i)\frac{\partial}{\partial x_i^\alpha}.$

\subsection{The space $R(M/B)$ is a principal bundle over $M$.}

 Let us describe this structure here.
The pro-algebraic group $$\Gamma = \{ \gamma : \widehat{(\CC^q,0)} \overset{\sim}{\rightarrow} \widehat{(\CC^q,0)}; \text{ formal invertible}\}$$ is the projective limit of groups $\Gamma_k = \{ j_k \gamma | \gamma \in \Gamma\}$. It
 acts on $R(M/B)$ by composition and  $ R(M/B) \times \Gamma  \to R(M/B) \underset{M}{\times} R(M/B)$ sending $(r,\gamma)$ to $(r,r\circ \gamma)$ is an isomorphism.
The action of $\gamma \in \Gamma$ on $R(M/B)$ is denoted by $S\gamma : R(M/B) \to R(M/B)$ as it acts as a change of
source coordinate of frames. At the coordinate ring level, this action is given by the action of formal change of coordinate on 
$\CC[\partial_1,\ldots,\partial_q]$ followed by the evaluation at $0$ in order to get operators with constant coefficients.  
   This action has degree $0$ with respect to the filtration induced by the order of differential operators meaning that for any $k$, the 
bundle of order $k$ frames $R_kM$ is a principal bundle over $M$ for the group $\Gamma_k = \{ j_k\gamma \ | \ \gamma \in \Gamma \}$.

\subsection{Compatibility of the differential and principal structures}

Let $\epsilon_1, \ldots \epsilon_q$ be coordinates on $(\CC^q,0)$ such that $\partial_i = \frac{\partial}{\partial \epsilon_i}$.
The Lie algebra of $\Gamma$ is the Lie algebra of formal vector fields vanishing at $0$ : $\widehat{\mathfrak{X}}^0(\CC^q,0)$. The infinitesimal action of $v = \sum a_i(\epsilon) \partial_i \in \widehat{\mathfrak{X}}(\CC^q,0)$, the Lie algebra of formal vector fields on $R(M/B)$ is  $f \otimes P \mapsto f \otimes (P\circ v)|_{\epsilon = 0}$. When $v$ belongs to $\bigoplus \CC \partial_i$, this action is the differential structure ; when $v$ belongs to $\widehat{\mathfrak{X}}^0(\CC^q,0)$, it is the action induced by the action of the group $\Gamma$.

\subsection{The fibers automorphism groupoid}
A groupoid is naturally associated to any principal bundle. In the case of $R(M/B)$ one can give three definitions of this groupoid.
\begin{enumerate} 
\item The groupoid $Aut(M/B)$ is the groupoid of formal invertible maps:
$$
\{ \widehat{\varphi} : (M_b,p_1) \overset{\sim}{\rightarrow} (M_{b'},p_2)\ |\ (b,b') \in B\times B , p_1\in M_b , p_2 \in M_{b'}\}
$$
\item 
The map 
$$
\begin{array}{rcl}
R(M/B) \times R(M/B) & \rightarrow& Aut(M/B) \\
(r,s) &\mapsto& r\circ s^{\circ-1}
\end{array}
$$
realized the quotient by the diagonal action of $\Gamma$.
\item The groupoid of $\Gamma$-equivariant maps between fibers of $\pi : R(M/B) \to M$ is $Aut(M/B)$.
\end{enumerate}

The second definition is adapted to prove that $Aut(M/B)$ is pro-algebraic groupoid.

\subsection{Galois groupoids}

It is a proper generalisation of the Galois group of linear differential or difference equation.
It is a quotient of the Galois groupoid defined in \cite{malgrange} for foliations and in \cite{granier, casalediscret} for rational dominant maps.

\subsubsection{Galois groupoid of a rational vector field}

The Galois groupoid for vector fields where already studied in \cite{davy}. We recall the definition in the fibered situation. The proof are immediate from \cite{davy} or \cite{casaledavy} and we left them to the reader.

A rational functions $H \in \CC(R(M/B))$ 
such that $RX \cdot H = 0$ is a differential invariant of $X$. Let $Inv(X) \subset \CC(R(M/B))$ be the subfield of differential invariants of $X$. 
Let $W$ be a model for $Inv(X)$ and $\pi : R(M/B) \dasharrow W$ be the dominant map induced by the inclusion $Inv(X) \subset \CC(R(M/B))$.

\begin{definition}
The transversal Galois groupoid is $Mal(X/B) = R(M/B) \underset{W}{\times}R(M/B) \subset Aut(M/B)$
\end{definition}

To defined properly this fiber product one needs to restrict $\pi : (R(M/S))^o \to W$ on its domain of definition then  $(R(M/B) \underset{W}{\times} R(M/B))$ is defined to be the Zariski closure of  $(R(M/B)^o \underset{W}{\times} R(M/B)^o)$ in $R(M/B) \times R(M/B)$.

 When $B = \{\ast\}$, D. Davy \cite{davy} proved that this definition is equivalent to Malgrange's original definition. In particular Malgrange shows in  \cite{malgrange} that  there exists a Zariski open subset $M^{o}$ of $M$ such that the restriction of $Mal(X)$ to $Aut(M^o)$ is a subgroupoid. 

%

\subsubsection{Galois groupoid of a rational dominant map}
A rational functions $H \in \CC(R(M/B))$ 
such that $H \circ R\Phi = H$ is a (fibered) differential invariant of $\Phi$. Let $Inv(\Phi) \subset \CC(R(M/B))$ be the subfield of differential invariants of $\Phi$.
 
Let $W$ be a model for $Inv$ and $\pi : R(M/B) \dasharrow W$ be the dominant map induced by the inclusion $Inv \subset \CC(R(M/B))$. 

\begin{definition}
The (fibered) Galois groupoid is $Mal(\Phi /B) = R(M/B) \underset{W}{\times}R(M/B) \subset Aut(M/B)$
\end{definition}

Here again, to defined properly this fiber product one needs to restrict $\pi : (R(M/S))^o \to W$ on its domain of definition then  $(R(M/B) \underset{W}{\times} R(M/B))$ is defined to be the Zariski closure of  $(R(M/B)^o \underset{W}{\times} R(M/B)^o)$ in $R(M/B) \times R(M/B)$. 

The following lemmas are important in the proof of the specialisation theorem.

\begin{lemma}
Let $O\subset R(M/B)^o$ be a Zariski open subset then $\overline{O\underset{W}{\times}O} =  \overline{R(M/B)^o \underset{W}{\times} R(M/B)^o}$.
\end{lemma}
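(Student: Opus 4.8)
The plan is to show both inclusions, the nontrivial one being $\overline{R(M/B)^o \underset{W}{\times} R(M/B)^o} \subseteq \overline{O \underset{W}{\times} O}$, by exploiting the $\Gamma$-principal bundle structure of $R(M/B) \to M$ together with the fact that the invariant field $Inv$ (for $\Phi$, or $Inv(X)$ for a vector field) is $\Gamma$-invariant in the appropriate sense, so that $\pi : R(M/B) \dasharrow W$ is equivariant for a trivial action on $W$. First I would observe that the diagonal $\Gamma$-action on $R(M/B) \underset{W}{\times} R(M/B)$ (acting on the source coordinate of frames via $S\gamma$ on each factor simultaneously) preserves the fiber product, because $\pi$ is constant on $\Gamma$-orbits: a differential invariant $H$ satisfies $H \circ S\gamma = H$ since invariants are functions on the quotient $R(M/B)/\Gamma$-type object defined by $Inv$. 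Hence for any $\gamma \in \Gamma_k$ and any $k$, $S\gamma \times S\gamma$ maps $R_k(M/B)^o \underset{W_k}{\times} R_k(M/B)^o$ to itself.

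Next, the key point is that $O \subset R(M/B)^o$ being Zariski open and dense (it is nonempty open in an irreducible variety, hence dense), its $\Gamma$-saturation $\Gamma \cdot O = \bigcup_{\gamma} S\gamma(O)$ is all of $R(M/B)^o$ up to a set that does not affect Zariski closures — more precisely, at each finite level $k$, since $R_k(M/B) \to M$ is a principal $\Gamma_k$-bundle (a smooth surjective morphism with $\Gamma_k$ an algebraic group), the image $\rho_k(O_k)$ in $M$ (where $\rho_k$ is the projection) is a dense constructible subset, and the saturated set $\Gamma_k \cdot O_k$ is dense in $R_k(M/B)$ and contains the fiber-generic part over $\rho_k(O_k)$. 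Then for a generic point $(r,s)$ of $R(M/B)^o \underset{W}{\times} R(M/B)^o$, I would write $(r,s) = (r' \circ \gamma, s' \circ \gamma)$ with $\gamma \in \Gamma$ chosen so that $r' = r \circ \gamma^{\circ -1} \in O$; the catch is that one needs \emph{the same} $\gamma$ to also put $s' = s \circ \gamma^{\circ -1}$ into $O$. This is where I would use that $r$ and $s$ lie over the same point of $M$ is \emph{not} assumed — rather, $r, s$ are constrained only to lie over the same $W$-point. So instead I would argue orbit-wise: the $\Gamma$-orbit of $(r,s)$ in the fiber product is $\{(r\circ\gamma, s\circ\gamma)\}$, a single $\Gamma$-orbit isomorphic (via the first projection, say, on the principal-bundle part) to a $\Gamma$-orbit in $R(M/B)$; since $O$ meets every $\Gamma$-orbit through a dense-image point, a generic orbit meets $O \underset{W}{\times} O$, i.e., there is $\gamma_0$ with $(r\circ \gamma_0, s \circ \gamma_0) \in O \underset{W}{\times} O$. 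Taking Zariski closures and using $\Gamma$-equivariance of the whole picture then yields $(r,s) = S\gamma_0^{\circ -1}\cdot (r\circ\gamma_0, s\circ\gamma_0) \in \overline{O \underset{W}{\times} O}$, since $S\gamma_0^{\circ -1}$ is an automorphism of $R(M/B)$ preserving the fiber product and its closure.

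The main obstacle is the one just flagged: making precise that a single group element $\gamma_0$ simultaneously moves \emph{both} frames $r$ and $s$ into $O$, when $r$ and $s$ need not share a base point in $M$. The clean way around this is to work at each finite level $k$, where everything is an honest algebraic variety and morphism, and to use that $O_k$ is open dense so its complement $Z_k = R_k(M/B)^o \setminus O_k$ is a proper closed subset; then the set of $(r,s)$ in the fiber product for which \emph{no} $\gamma$ works is contained in the image, under the (dominant, by density of $O$) map $O_k \underset{W_k}{\times} O_k \to R_k(M/B) \underset{W_k}{\times} R_k(M/B)$ composed with the $\Gamma_k$-action, of a set of strictly smaller dimension — hence a proper closed subset, whose complement's closure is everything. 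Thus $\overline{O_k \underset{W_k}{\times} O_k} \supseteq \overline{\Gamma_k \cdot (O_k \underset{W_k}{\times} O_k)} = \overline{R_k(M/B)^o \underset{W_k}{\times} R_k(M/B)^o}$, using $\Gamma_k$-invariance of the right-hand side and density of a single generic orbit-slice. The reverse inclusion is immediate since $O \subseteq R(M/B)^o$. Passing to the projective limit over $k$ gives the statement. I expect the dimension-count / dominance argument at finite level to be short once the $\Gamma_k$-equivariance of $\pi_k$ and the principal bundle structure are invoked, both of which are recalled in the preceding subsections.
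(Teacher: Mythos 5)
The paper states this lemma without proof, so there is nothing to compare line by line; judging your argument on its own, it has a genuine gap, and in fact the whole mechanism you chose cannot reach the problematic points. First, a repairable inaccuracy: a differential invariant $H$ of $R\Phi$ does \emph{not} satisfy $H\circ S\gamma=H$; pre-composition by $\Gamma$ merely commutes with $R\Phi$, so $\Gamma$ acts on the field $Inv$ and hence (birationally) on $W$, and $\pi$ is $\Gamma$-\emph{equivariant}, not $\Gamma$-invariant. Equivariance still gives you that the diagonal action preserves the fiber product, so this part can be fixed. The fatal problem is that the action $S\gamma$ only changes the source coordinate of a frame: since $\gamma(0)=0$, one has $(r\circ\gamma)(0)=r(0)$, so every $\Gamma$-orbit is contained in a single fiber of $\rho\colon R(M/B)\to M$, and $\Gamma_k\cdot O_k=\rho_k^{-1}(\rho_k(O_k))$. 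Consequently, if $O$ omits the fibers over a hypersurface of $M$ (a perfectly legitimate Zariski open subset of $R(M/B)^o$), no choice of $\gamma_0$ can move a pair $(r,s)$ with $r(0)\notin\rho(O)$ into $O\underset{W}{\times}O$. Your concluding step, ``the set of $(r,s)$ for which no $\gamma$ works has strictly smaller dimension, hence its removal does not change the closure,'' is exactly where the content of the lemma is hidden: the fiber product $R(M/B)^o\underset{W}{\times}R(M/B)^o$ need not be irreducible, and a proper closed subset can contain entire irreducible components (typically components sitting over special loci in $M$ or over points of $W$ where the fibers of $\pi$ jump). Removing such components does change the Zariski closure, so density of the ``good'' locus inside each component is precisely what must be proved, not assumed; your argument only re-derives the trivial fact that components already meeting $O\times O$ survive.

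To see that this is not a pedantic objection, note that for a general dominant map to a general model the statement is simply false: for $\pi\colon\mathbb{A}^3\to\mathbb{A}^2$, $(x,y,z)\mapsto(x,xy)$, the fiber product acquires the extra component $\{x=x'=0\}$, which is lost when one shrinks to $O=\{x\neq0\}$. So any correct proof must use the specific structure of the situation — the $R\Phi$-invariance of $\pi$ (i.e.\ the dynamics, e.g.\ via the Amerik--Campana description of generic fibers of $\pi$ as closures of $R\Phi$-orbits, which the paper invokes for the next lemma), or a characterization of the closure that is manifestly independent of the open set — rather than the vertical $\Gamma$-action, which never moves the base point in $M$ and therefore cannot account for the components that make the lemma nontrivial.
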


\begin{lemma}
Galois groupoid of $\Phi$ is the Zariski closure of the set of Taylor expansions of iterates of $\Phi$.
\end{lemma}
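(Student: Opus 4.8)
The plan is to unwind both definitions—the Galois groupoid as a fiber product over a model $W$ of the invariant field $Inv(\Phi)$, and the "Zariski closure of Taylor expansions of iterates"—and to show they define the same subvariety of $Aut(M/B)$. Denote by $\Gamma\Phi$ the set $\{ j_\infty(\Phi^{\circ m})_{|M_b} : m\in\ZZ, b \in B\}$ of (germs of) iterates restricted to fibers, viewed inside $Aut(M/B)$ via their Taylor expansions at points of definition, and let $\overline{\Gamma\Phi}$ be its Zariski closure. One inclusion is soft: each iterate $\Phi^{\circ m}$ acts on $R(M/B)$ by $R(\Phi^{\circ m}) = (R\Phi)^{\circ m}$, so every differential invariant $H\in Inv(\Phi)$ satisfies $H\circ R(\Phi^{\circ m}) = H$; hence $(r, R(\Phi^{\circ m})(r))$ lies in the fiber product $R(M/B)\underset{W}{\times}R(M/B)$ for $r$ in the domain of definition, and since the latter is Zariski closed, $\overline{\Gamma\Phi}\subset Mal(\Phi/B)$.

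The real content is the reverse inclusion, $Mal(\Phi/B)\subset\overline{\Gamma\Phi}$. The strategy is to exhibit a dominant map $R(M/B)\dasharrow W'$ whose fibers are exactly the orbit closures of $R\Phi$, and to identify $W'$ with (a model of) $Inv(\Phi)$. Concretely, let $I$ denote the field of rational functions on $R(M/B)$ that are constant on Zariski closures of $R\Phi$-orbits; by construction the first-projection and "apply some iterate" relation generates a groupoid whose Zariski closure $\overline{\Gamma\Phi}$ is $R(M/B)\underset{W_I}{\times}R(M/B)$ for a model $W_I$ of $I$. It then suffices to prove $I = Inv(\Phi)$ as subfields of $\CC(R(M/B))$. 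The inclusion $Inv(\Phi)\subset I$ is the computation of the previous paragraph. For $I\subset Inv(\Phi)$: a rational function $H$ constant on all $R\Phi$-orbit closures satisfies $H\circ R\Phi = H$ on a dense set, hence identically, so $H\in Inv(\Phi)$. Thus the two fiber products agree, and with the first lemma (which guarantees the closure is insensitive to shrinking $R(M/B)$ to a Zariski open subset) one concludes $Mal(\Phi/B) = \overline{\Gamma\Phi}$.

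I expect the main obstacle to be the precise identification of $\overline{\Gamma\Phi}$ with a fiber product $R(M/B)\underset{W_I}{\times}R(M/B)$ over the right field. The subtlety is that $\Gamma\Phi$ is only a \emph{pseudogroup} of germs along orbits, not an algebraic family over a base, so "the Zariski closure of Taylor expansions" must first be shown to be a groupoid (closure under composition and inverse of a set generated by iterates), and then one must argue that this closed groupoid is saturated by the partition into orbit closures—i.e. that the rational quotient of $R(M/B)$ by the $R\Phi$-action has function field exactly $I$, and that the graph of this quotient equivalence relation is $\overline{\Gamma\Phi}$. This is essentially the statement, in the relative/fibered setting, that Malgrange's groupoid of a rational dominant map coincides with the D-envelope of the dynamics, proved in $B=\{\ast\}$ in \cite{casalediscret} (and recalled via \cite{davy}); so the proof will consist in checking that the arguments there are compatible with the fibered frame bundle $R(M/B)$ and the relative invariants, the nontrivial points being that $R\Phi$ still has degree $0$ for the filtration and that the density arguments survive the localisation by $jac$ and the passage to $M^o$. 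Once those compatibilities are in place—most of which are recorded in Section~\ref{def}—the identity $I = Inv(\Phi)$ and hence the lemma follow formally.
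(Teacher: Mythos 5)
Your soft inclusion and the observation $I=Inv(\Phi)$ are fine (the latter is indeed immediate), but the load-bearing step is exactly the one you dismiss with ``by construction'': the claim that $\overline{\Gamma\Phi}$ equals $R(M/B)\underset{W_I}{\times}R(M/B)$ for a model $W_I$ of $I$. The fiber product over a model of the invariant field is the relation ``all rational invariants agree'', whereas $\overline{\Gamma\Phi}$ is the closure of the relation ``lie on the same $R\Phi$-orbit''; one inclusion is clear, but equality requires that there be \emph{enough} rational invariants, i.e.\ that the general fiber of $\pi : R_k(M/B)\dasharrow W_k$ be exactly the Zariski closure of a general $R_k\Phi$-orbit. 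Nothing in the construction gives this: a priori $Inv(\Phi)$ could be too small and the fibers of $\pi$ strictly larger than orbit closures. This is precisely where the paper's proof invokes the theorem of Amerik and Campana \cite{amerikcampana}, applied to the prolonged dynamics $R_k\Phi$ on $R_k(M/B)$: the rational quotient by the invariant field has general fibers equal to orbit closures. With that input, the fibers of $Mal_k(\Phi/B)$ and of $T_k$ over a general source point coincide (both being the closure of the Taylor expansions of the iterates $\Phi^{\circ n}$ at that point), the first lemma (insensitivity of the closure to shrinking to a Zariski open set) gives $Mal_k(\Phi/B)\subset T_k$, and the reverse inclusion is your soft argument made uniform over all source points via the rational maps $j_k\Phi^{\circ n}: M\dasharrow Aut_k(M/B)$.

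Your fallback---that the missing identification is ``essentially'' the $B=\{\ast\}$ statement of \cite{casalediscret} and only compatibility with the fibered frame bundle needs checking---does not close the gap either: the groupoid there is Malgrange's D-envelope of the dynamics, not the invariant-field fiber product taken as the definition in this paper, and relating the two notions is exactly what the present lemma (and, for vector fields, \cite{davy}) is meant to establish; citing it here is circular. So as written the proposal restates the lemma at its crucial point rather than proving it; the missing idea is the Amerik--Campana orbit-closure theorem (or an equivalent argument that the invariant field separates general orbit closures). Incidentally, with the paper's fiberwise comparison you also do not need to show separately that $\overline{\Gamma\Phi}$ is a groupoid, a point your outline flags as an extra obstacle.
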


\begin{proof} Let $k$ be an integer and consider the order $k$ frame bundle $R_k(M/B)$.
From \cite{amerikcampana}, the dominant map $\pi : R_k(M/B) \dasharrow W_k$ induced by the inclusion $Inv_k \subset \CC(R_k(M/B))$ as the following property : for a general $j_k(r) \in R_k(M/B)$ the Zariski closure of the fiber $\pi^{-1}(\pi( j_k(r)))$ is the Zariski closure of the $R_k(\Phi)$ orbit of $j_k(r)$. 

Let $q : R_k(M/B) \times R_k(M/B) \to Aut_k(M/B)$ be the quotient by the diagonal action of $\Gamma$. The image $q(j_k(\Phi \circ r), j_k(r))$ is $j_k(\Phi)$ : the order $k$ Taylor expansion of $\Phi$ at $r(0)$. Then $Mal_k(\Phi) = q(R_k(M/B) \underset{W_k}{\times}R_k(M/B))$ contains Taylor expansions of $\Phi$.

For general $m \in M$, one can find a frame $r$ with $r(0)=m$ such that $\pi^{-1}(\pi( j_k(r))) \times \{r\} \subset R_k(M/B)\times \{r\}$ is the Zariski closure of the orbit of $\{r\}\times\{r\}$ for $\Phi$ acting on the first factor. The projection $q(\pi^{-1}(\pi( j_k(r))))$ 
is the Zariski closure of the set of the Taylor expansions of $\Phi^{\circ n}$ at $m$. It is also the subset of $Mal_k(\Phi/B)$ of order $k$ jet of maps with source at $m$.

Let $T_k$ be the Zariski closure of the set of all the Taylor expansions of iterates of $\Phi$. The subvarieties $T_k$ and $Mal_k(\Phi)$ coincide for source out of a closed subvariety of $M$. By minimality $Mal_k(\Phi) \subset T_k$. For $n \in \NN$ let $j_k\Phi^{\circ n} : M \to Aut_k(M/B)$ be the map sendind $m$ to the Taylor expansion of $\Phi^{\circ n}$ at $m$. This map is rational on $M$ and belongs to $Mal_k(\Phi/B)$ for general values of $m$ thus for any $m\in M$ (where the map is defined).
This proves the equality.
\end{proof}

\begin{remark} When $\Phi$ is the map arising from a linear difference equation on a vector bundle $M \to B$ : 
\begin{enumerate}
\item the pseudogroup $Mal(\Phi/B)$ is ``the analog'' of the intrinsec Galois group of the equation over the difference field $(\CC(B), \sigma)$
\item the pseudogroup $Mal(\Phi/ \ast)$ is ``the analog'' of the  intrinsec differential Galois group of the difference equation over $\CC(B)$ with difference operator $\sigma$ and differential structure given by the exterior differential $d : \CC(B) \to \CC(B) \underset{\CC[B]}{\otimes} \Omega^1_B$ 
\end{enumerate}
\end{remark}
%
%

\section{A specialisation theorem and a confluence theorem}\label{confluence}

\subsection{The specialisation theorem}

When the equation depends on parameters, the base $B$ is fibered on $S$ the parameter space and the map $\sigma : B \dasharrow B$ preserves the fibers of this map.
The first result is a comparison theorem of Galois groupoid of $\Phi$ with Galois groupoid of its restriction on a fiber $\Phi_s : M_s \dasharrow M_s$ for $s \in S$

\begin{theorem}\label{specialisationthm}
For all $k \in \NN$, $\dim Mal_k(\Phi_s/B_s) \leq \dim_S Mal_k(\Phi/B)$ with equality for general $s\in S$
\end{theorem}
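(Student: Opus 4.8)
The plan is to reduce the statement to a fibration-level semicontinuity statement about the map $\pi$ cutting out differential invariants. First I would fix $k$ and work with the order-$k$ frame bundle $R_k(M/B)$, which is a genuine finite-dimensional algebraic variety fibered over $S$; write $R_k(M_s/B_s)$ for the fiber over $s$, and note that by the explicit description of the prolongation $R_k\Phi$ and the fact that prolongation has degree $0$ with respect to the filtration, the restriction of $R_k\Phi$ to the fiber over $s$ is the prolongation $R_k(\Phi_s)$ of the restricted map. The key input will be the characterisation of $Mal_k$ proved in the lemma above: $Mal_k(\Phi/B)$ is the Zariski closure of the set of order-$k$ Taylor expansions of all iterates $\Phi^{\circ n}$, and likewise $Mal_k(\Phi_s/B_s)$ is the Zariski closure of the Taylor expansions of iterates of $\Phi_s$. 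Since iterating commutes with restriction to a fiber, the maps $j_k\Phi^{\circ n}$ restrict fiberwise, so the family $\bigcup_n \overline{\mathrm{im}(j_k\Phi^{\circ n})}$ inside $Aut_k(M/B)$ has the property that its fiber over $s$ contains the union $\bigcup_n \mathrm{im}(j_k\Phi_s^{\circ n})$.

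From here the argument is standard constructibility/semicontinuity of fiber dimension. Let $T \subset Aut_k(M/B)$ be the Zariski closure of $\bigcup_n \mathrm{im}(j_k\Phi^{\circ n})$, so $T = Mal_k(\Phi/B)$ by the cited lemma, and $T$ is fibered over $S$ with $\dim_S T = \dim T - \dim S$. Denote by $T_s$ its scheme-theoretic fiber. On one hand, each $j_k\Phi_s^{\circ n}$ lands in $T_s$, so the Zariski closure in $T_s$ of $\bigcup_n \mathrm{im}(j_k\Phi_s^{\circ n})$ is contained in $T_s$; by the lemma this closure is exactly $Mal_k(\Phi_s/B_s)$ (here I would note that applying the lemma to the fiber requires checking that the general-position hypotheses from \cite{amerikcampana} used in the lemma's proof hold for $\Phi_s$ for general $s$ — this is the point where one must argue that the open loci behave well in the family). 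Thus $\dim Mal_k(\Phi_s/B_s) \leq \dim T_s$. By upper semicontinuity of fiber dimension for the dominant morphism $T \to S$, $\dim T_s \leq \dim T - \dim S = \dim_S Mal_k(\Phi/B)$ for all $s$, and $\dim T_s = \dim T - \dim S$ on a dense open subset of $S$. This gives the inequality for all $s$ and equality for $s$ in a dense open set.

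To upgrade "dense open" to "general" in the paper's sense (complement of a countable union of proper subvarieties), and to get the reverse inequality $\dim Mal_k(\Phi_s/B_s) \geq \dim_S Mal_k(\Phi/B)$ on such a locus, I would argue that for general $s$ the fiber $T_s$ is not merely of the expected dimension but actually equals $Mal_k(\Phi_s/B_s)$. The inclusion $Mal_k(\Phi_s/B_s) \subseteq T_s$ is automatic; for the reverse, one uses that a differential invariant of $\Phi$ restricts to a differential invariant of $\Phi_s$, so $Inv_k$ restricted to $R_k(M_s/B_s)$ lands in $Inv_k(\Phi_s)$, hence $\pi_s : R_k(M_s/B_s) \dasharrow W_{k,s}$ factors the invariant map of $\Phi_s$, which gives $T_s \subseteq Mal_k(\Phi_s/B_s)$ over the locus where $\pi_s$ has the \cite{amerikcampana} property; the countable union arises because one must intersect over all $k$ the (a priori only countably many) conditions ensuring the generic behaviour of each $\pi_k$ and each $W_k$ in the family. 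I expect the main obstacle to be precisely this last point: controlling uniformly in the family that the Amerik–Campana genericity (the fiber of the invariant map equals the Zariski closure of the orbit) is inherited by the restricted system $\Phi_s$, i.e. that the "bad locus" in $M$ for $\Phi$ meets the general fiber $M_s$ in the bad locus for $\Phi_s$ — this requires a careful relative version of the orbit-closure argument rather than any new idea, but it is where the real work lies.
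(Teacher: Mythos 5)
Your argument breaks at the semicontinuity step, and unfortunately it breaks in the direction that matters. For a dominant morphism $T \to S$ with $T$ irreducible, upper semicontinuity of fiber dimension says that $s \mapsto \dim T_s$ can only jump \emph{up} on closed subsets: every fiber satisfies $\dim T_s \geq \dim T - \dim S$, and equality holds only on a dense open subset of $S$. So your claimed inequality $\dim T_s \leq \dim T - \dim S$ ``for all $s$'' is false in general (think of a blow-up, whose special fiber has larger dimension than the generic one). Consequently the chain $\dim Mal_k(\Phi_s/B_s) \leq \dim T_s \leq \dim_S Mal_k(\Phi/B)$ only works for $s$ in a dense open set, i.e.\ exactly where the statement is easy; it says nothing about the special parameters, which are the whole point of the theorem. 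Nor can the gap be closed by writing $T$ as the closure of $\bigcup_n \mathrm{im}(j_k\Phi^{\circ n})$ and bounding each piece fiberwise: the Zariski closure of a countable union of sets of dimension $\leq d$ can have dimension strictly larger than $d$, so the containment $Mal_k(\Phi_s/B_s) \subseteq T_s$ alone cannot yield the bound at special $s$.

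The paper's proof uses a different mechanism, which is the missing idea. It writes $\dim Mal_k(\Phi/B)$ as $\dim M$ plus the dimension of the Zariski closure $V(j_kr)$ of a general $R_k\Phi$-orbit in $R_k(M/B)$, and then proves semicontinuity of \emph{orbit-closure} dimension (after \cite{amerikcampana}, in the spirit of \cite{bonnet}): for each degree $d$, the space of polynomials of degree $\leq d$ vanishing on $V(j_kr)$ is the solution space of \emph{countably many linear conditions} (one for each iterate $\Phi^{\circ n}$) depending algebraically on $j_kr$; hence its dimension $h_{j_kr}(d)$ is minimal for $j_kr$ outside a countable union of proper closed subvarieties and can only increase at special frames. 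More equations means a smaller Hilbert function, hence a Hilbert polynomial of smaller or equal degree, hence $\dim V(j_kr) \leq \dim V(j_kr')$ for $j_kr$ arbitrary and $j_kr'$ general. Applying this to frames lying in $R_k(M_s/B_s)$ (whose $R_k\Phi$-orbits are exactly the $R_k\Phi_s$-orbits, since $\Phi$ preserves the fibration over $S$) gives $\dim Mal_k(\Phi_s/B_s) \leq \dim_S Mal_k(\Phi/B)$ for \emph{every} $s$, and the equality for general $s$ follows because the bad locus is a countable union of proper closed subvarieties, so a general point of a general fiber is a general point of $R_k(M/B)$ --- this is also where the ``countable union'' in the definition of \emph{general} comes from. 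Your instinct that the Amerik--Campana genericity must be controlled in the family is pointing at the right place, but the tool needed there is this linear-conditions/Hilbert-polynomial argument, not fiber-dimension semicontinuity of the family $T \to S$.
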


\begin{proof}
The dimension of $Mal_k(\Phi/B)$ is the dimension of $M$ plus the dimension of the Zariski closure of a general orbit of $R_k(\Phi)$ in $R_k(M/B)$. One has to prove that the dimension of the Zariski closure of a particular orbit is smaller that the general one.
The proof is done in \cite{amerikcampana}. It can be adapted from \cite{bonnet} where this inequality is proved for leaves of algebraic foliations. 

For $j_k r\in R_k(M/B)$, $V(j_k r)$ is the Zariski closure of the orbit of $j_k r$. We will compare Hilbert polynomial of these subvarieties and deduce the wanted inequalities of dimensions.

The ideal of   $V(j_k r)$ is $ I(j_k r) = \{f \in \CC[R_k(M/B)] | \forall n \in \NN \  j_k(r)  \in \overline{\{f \circ \Phi^{\circ n} =0\}} \}$ and the subvector space of equation of degree less than or equal to $d$ is also described by these linear equations in $f$ depending on $r$. Let $h_{j_k r}(d)$ be the dimension of this space. There exists a Zariski open subset $O \subset R_k(M/B)$ and an integer $h_k$ such that for any $r$,  $h_{j_k r}(d) \geq h_k$ with equality for $j_k r \in O$.

This implies that the Hilbert polynomial of $V(j_k r)$ is contant for $j_k r$ general in $R_k(M/B)$, and that for other frames, it is smaller. The dimension of a subvariety is the degree of its Hilbert polynomial. The theorem is proved.
\end{proof}

\subsection{The confluence theorem}
In this section, we will assume the parameter space to be 1-dimensional with coordinate $s$.
This theorem compares  Galois groupoids of $\Phi$ with Galois groupoid of $X$ a vector field on $M_{s_0}$ such that $\Phi_s = Id + (s-s_0) X +o(s-s_0)$

\begin{theorem}\label{confluencethm}
For all $k \in \NN$, $\dim Mal_k(X/B_{s_0}) \leq \dim_S Mal_k(\Phi/B) $
\end{theorem}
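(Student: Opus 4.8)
The plan is to reduce the confluence statement to the specialisation theorem (Theorem \ref{specialisationthm}) already proved. The key observation is that the vector field $X$ on $M_{s_0}$ generates a local holomorphic flow $\exp(tX)$, and this flow should be recovered as a limit of the discrete iterates $\Phi_s^{\circ n}$ when $s \to s_0$ and $n \to \infty$ in such a way that $n(s-s_0) \to t$. More precisely, since $\Phi_s = Id + (s-s_0)X + o(s-s_0)$, the standard estimate for the convergence of Euler's polygonal method gives that $\Phi_s^{\circ \lfloor t/(s-s_0)\rfloor}$ converges to $\exp(tX)$ uniformly on compact subsets of $M_{s_0}$, as germs of biholomorphisms between fibers. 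Prolonging to the frame bundle, $R_k\Phi_s^{\circ n}$ converges to $R_k\exp(tX)$ in $Aut_k(M_{s_0}/B_{s_0})$.

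Next I would use the second lemma of Section \ref{def}: $Mal_k(\Phi/B)$ is the Zariski closure of the set of Taylor expansions of all iterates $\Phi^{\circ n}$, and its fiberwise-over-$S$ analog $Mal_k(\Phi_s/B_s)$ is the Zariski closure of Taylor expansions of iterates of $\Phi_s$. By the convergence above, every $k$-jet of $\exp(tX)$ at a general point of $M_{s_0}$ is a limit of points of $\bigcup_s Mal_k(\Phi_s/B_s) \subset Mal_k(\Phi/B)$ lying over parameter values approaching $s_0$; since $Mal_k(\Phi/B)$ is Zariski closed and the fiber over $s_0$ is closed in it, these limit jets lie in $Mal_k(\Phi/B)_{s_0}$. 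On the other hand, $Mal_k(X/B_{s_0})$ is generated (again as a Zariski closure) by the $k$-jets of $\exp(tX)$ for all $t$ in a neighbourhood of $0$ — this is the content of Davy's description recalled in Section \ref{def}, that $Mal(X/B)$ is the closure of the flow, which one checks using that $Inv(X)$ consists of the rational first integrals of $RX$, equivalently of the functions invariant under $R\exp(tX)$ for all $t$. Hence $Mal_k(X/B_{s_0}) \subseteq Mal_k(\Phi/B)_{s_0}$, and taking dimensions,
$$
\dim Mal_k(X/B_{s_0}) \leq \dim Mal_k(\Phi/B)_{s_0} \leq \dim_S Mal_k(\Phi/B),
$$
where the last inequality is Theorem \ref{specialisationthm} applied with $s = s_0$.

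The main obstacle I anticipate is making the convergence argument compatible with the algebraic (Zariski-closure) formulation: one must ensure that the jets of $\exp(tX)$ obtained as limits of jets of iterates $\Phi_s^{\circ n}$ are genuinely limits \emph{within} the constructible/closed set $Mal_k(\Phi/B)$, for which it is cleanest to work with a fixed general source point $m \in M_{s_0}$, use the identification of the source-fiber of $Mal_k(\Phi/B)$ at $m$ with the Zariski closure of the set of jets $\{j_k\Phi^{\circ n}(m_s)\}$ for $m_s \to m$, and invoke that Zariski closure contains Euclidean closure. A secondary point requiring care is the uniformity of the Euler-scheme estimate in the fiber direction and its prolongation to order-$k$ jets — this is a routine but slightly technical estimate on the coefficients of the composition $\Phi_s \circ \cdots \circ \Phi_s$, which I would state as a lemma and prove by the usual Gronwall-type induction, noting that the prolongation $R_k$ is itself of the form "$Id + (s-s_0)R_kX + o(s-s_0)$" by the explicit formula for $RX$ recalled in Section \ref{def}.
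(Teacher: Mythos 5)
The second inequality in your chain is where the proposal breaks down. Theorem \ref{specialisationthm} bounds $\dim Mal_k(\Phi_{s_0}/B_{s_0})$, the Galois groupoid of the \emph{specialized map} $\Phi_{s_0}$ — and in the confluence situation $\Phi_{s_0}=Id$, so that groupoid is trivial and the theorem gives nothing. What you actually need is $\dim Mal_k(\Phi/B)_{s_0} \leq \dim_S Mal_k(\Phi/B)$, a bound on the \emph{fiber over $s_0$ of the global groupoid}, and this is not a consequence of the specialisation theorem: the containment provided by restricting invariants is $Mal_k(\Phi_{s_0}/B_{s_0}) \subseteq Mal_k(\Phi/B)_{s_0}$, which transfers the bound in the wrong direction, and upper semicontinuity of fiber dimension says that the fiber of $Mal_k(\Phi/B)$ over a special point is a priori \emph{larger} than the relative dimension, not smaller. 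Preventing this jump at $s_0$ is exactly the nontrivial content of the confluence theorem. The paper handles it by a field-theoretic argument absent from your plan: take a transcendence basis $H_1,\ldots,H_p$ of $Inv(R_k\Phi)$ over $\CC(s)$, normalize so the restrictions $H_1^0,\ldots,H_p^0$ at $s=s_0$ are defined and the vanishing order $\ell$ of $dH_1\wedge\cdots\wedge dH_p$ at $s_0$ is minimal; if $\ell>0$, a minimal polynomial relation $P(H_1^0,\ldots,H_p^0)=0$ produces $P(H_1,\ldots,H_p)=(s-s_0)F$ and replacing a basis element by $F$ strictly lowers $\ell$, a contradiction. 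Hence the restricted invariants stay functionally independent, giving $p$ independent invariants of $R_kX$ and the desired dimension bound. Without this (or some substitute establishing that the specialized invariant field does not drop transcendence degree), your argument does not prove the theorem.

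Your first half — $Mal_k(X/B_{s_0}) \subseteq Mal_k(\Phi/B)_{s_0}$ via the Euler-scheme limit $\Phi_s^{\circ\lfloor t/(s-s_0)\rfloor}\to\exp(tX)$, plus the lemma identifying $Mal_k$ with the closure of jets of iterates and the fact that Zariski-closed sets are Euclidean-closed — is a workable but much heavier route to the easy half; it also relies on an unproved flow analog of that lemma for $X$. The paper obtains the same containment in two lines purely algebraically: from $R\Phi_s = Id + (s-s_0)RX + o(s-s_0)$, any invariant $H=H^0+o(s-s_0)$ of $R\Phi$ restricts to $H^0$ with $RX\cdot H^0=0$, so $Inv(R_k\Phi)|_{s_0}\subset Inv(R_kX)$. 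You would do better to adopt that algebraic step, since its output (the restricted invariants $H^0$) is precisely what the missing independence argument above operates on.
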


\begin{proof}
First, the restrictions of differential invariants of $\Phi$ at $s=s_0$ give differential invariants of $X$. By expansion around $s_0$, $R\Phi_s = Id + (s-s_0)RX+ o(s-s_0)$ and $H = H^0+o(s-s_0)$ then $H\circ R\Phi_s = H$ implies $RX \cdot H^0 = 0$. This means that the fiber of $Mal_k(\Phi/B)$ at $s_0$ contains $Mal_k(X/B_{s_0})$ or equivalently $Inv(R_k\Phi)|_{s_0}$ contains $Inv(R_kX)$.

To prove the inequality one has to prove that $\dim Mal_k(\Phi/B)_{s_0}\leq \dim_S Mal_k(\Phi/B)$. As the codimension of these varieties are given by the transcendence degree of theirs fields of differential invariant, we will compare $Inv(R_k\Phi)_{s_0}$ and $Inv(R_k\Phi)$.

Let $H_1, \ldots H_p$ be a transcendence basis of $Inv(R_k\Phi)$ over $\CC(s)$. Because $s$ is a differential invariant, one can assume that the restriction of the basis to $s=s_0$ are well defined rational functions $H_1^{0}, \ldots, H_p^0$ and the vanishing order $\ell$ of $dH_1\wedge \ldots \wedge dH_p$ at $s=s_0$ is minimal. If $\ell =0$, these restrictions are functionally independant, the theorem is proved. If not, let $P(H_1^{0}, \ldots, H_p^0)$ be the minimal polynomial of $H_p^0$ over $\CC[H_1^{0}, \ldots, H_{p-1}^0]$. The invariant $P(H_1, \ldots, H_p)$ can be written as $(s-s_0) F$ with $F$ a function of $s, H_1, \ldots, H_p$. Up to some factorisation by $s-s_0$ one can assume that there exist a index $i$ such that $(\partial_i P)^0$ is well defined and not zero.  One has, up to some sign and mod $ds$,
$$
d(P(H_1,\ldots H_p)) \wedge dH_1\wedge \ldots \widehat{dH_i}\ldots \wedge dH_p = \partial_i P dH_1\wedge \ldots \wedge dH_p = (s-s_0)dF \wedge dH_1\wedge \ldots \widehat{dH_i}\ldots \wedge dH_p
$$
This contradicts the minimality of $\ell$.
\end{proof}

\subsection{Corollaries about Galois groupoids of discrete Painlev\'e equations}

A discrete Pain\-le\-v\'e equation is a birational map $\Phi$ of $\CC^2 \times \Cc$ fibered above a automorphism without periodic points of the curve of the independent variable $n$. This curve may be non compact. Formally the difference equation of invariant curves paramaterized by $n$ is the difference equation. Two properties are imposed to be called ``discrete-Painlev\'e equation'' :
\begin{enumerate}
\item It must have the singularity confinement property. There exists a fibers compactification $M$ of $\CC^2 \times \Cc\to \Cc$ such that $\Phi$ can be extended as a biholomorphism  of $M$.
\item It must degenerate on a differential Painlev\'e equation. There exists a deformation $\widetilde{M}$ of $M$ above $\CC$ and a deformation $\widetilde{\Phi}$ of $\Phi$ such that for $\epsilon \in \CC^\ast$  $(\widetilde{M}_\epsilon, \widetilde{\Phi}_\epsilon)$ is birrational to $ (M,\Phi)$ and at $0$ one has the Taylor expansion $\widetilde{\Phi}_\epsilon = Id + \epsilon X +o(\epsilon)$ with $X$ a vector field on $\widetilde{M}_0$ whose trajectories parameterized by open set in $\Cc$ are Painlev\'e transcendents. This vector field is called a continuous limit of $\Phi$.    
\end{enumerate}
The classification of the phase spaces of these equations can be found in \cite{sakai}. The equations themselves can be found in \cite{noumi}.  The spaces are classified by families indexes by affine Weyl groups $W$, each family depends on finite number of parameters $a$. Then Painlev\'e equations are denoted by $dP_W(a)$, $dP^\ast_W(a)$, $dP^{alt}_W(a)$, $qP_W(a)$ \ldots In this article any of these equations will be denoted by $dP_W(a)$. 
In Sakai's classification \cite{sakai}, it is proved that there exists an invariant relative 2-form $\omega \in \Omega^2_{M/ \Cc}$. In the framework of this article this means that we have an inclusion :
$$
Mal(dP_W(a) / \Cc) \subset  \{ \varphi : (M_{n},p) \to (M_{n'},q) |  (n,n')\in \Cc^2,  \varphi^\ast \omega =  \omega\}.
$$ 
This inclusion, the second property above and Cartan generic local classification of algebraic pseudogroups \cite{cartan} enable us to compute Galois groupoid of discrete Painlev\'e equations.  
\begin{theorem}\label{painlevéthm}
For any affine Weyl group $W$ and for general values of parameters $a$
$$
Mal(dP_W(a) / \Cc) =  \{ \varphi : (M_{n},p) \to (M_{n'},q) |  (n,n')\in \Cc^2,  \varphi^\ast \omega =  \omega\}.
$$ 
\end{theorem}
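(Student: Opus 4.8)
The plan is to prove the non-trivial inclusion, the reverse one $Mal(dP_W(a)/\Cc)\subseteq G$, where $G=\{\varphi:(M_n,p)\to(M_{n'},q)\mid\varphi^\ast\omega=\omega\}$, being exactly what Sakai's invariant relative $2$-form $\omega\in\Omega^2_{M/\Cc}$ records. Write $G_k$ for the order $k$ jets of $G$. Each $G_k$ is irreducible: it is a bundle with irreducible (in fact rational) fibres over the irreducible variety $M\times M$ of source--target pairs, the fibre being the space of $k$-jets of area-preserving germs $(M_n,p)\to(M_{n'},q)$, built from $SL_2$ by successive affine bundles. Since $Mal_k(dP_W(a)/\Cc)$ is a Zariski closed subvariety of $G_k$, the equality $Mal(dP_W(a)/\Cc)=G$ will follow once we prove $\dim Mal_k(dP_W(a)/\Cc)\geq\dim G_k$ for every $k$, and then pass to the projective limit in $k$.

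First I would place $dP_W(a)$ in the degeneration onto its continuous limit. By property (2) of a discrete Painlev\'e equation --- made explicit by the scaling exhibited above for $dP_{\RM{2}}$ --- after fixing the reduced parameters (the $\gamma$ of that example) at a general value there is a one-parameter family $\widetilde{\Phi}$ over $S=(\CC,0)$ whose special fibre at $\epsilon=0$ is $Id+\epsilon X+o(\epsilon)$ with $X$ the continuous limit on $\widetilde{M}_0\to\Cc_0$, and whose generic fibre $\widetilde{\Phi}_\epsilon$ ($\epsilon\neq0$) is birationally conjugate to a discrete Painlev\'e equation $dP_W(a_\epsilon)$, with $a_\epsilon$ ranging over general values of the parameter as $\epsilon$ and the reduced parameter vary. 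Birational conjugacy conjugates transversal Galois groupoids, so $\dim Mal_k(\widetilde{\Phi}_\epsilon/\widetilde{B}_\epsilon)=\dim Mal_k(dP_W(a_\epsilon)/\Cc)$; Theorem \ref{specialisationthm} applied to $\widetilde{\Phi}$ then gives $\dim Mal_k(dP_W(a)/\Cc)=\dim_S Mal_k(\widetilde{\Phi}/\widetilde{B})$ for general $a$, while Theorem \ref{confluencethm} gives $\dim Mal_k(X/\Cc_0)\leq\dim_S Mal_k(\widetilde{\Phi}/\widetilde{B})$. Combining these,
$$
\dim Mal_k(dP_W(a)/\Cc)\ \geq\ \dim Mal_k(X/\Cc_0).
$$

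Then I would identify $\dim Mal_k(X/\Cc_0)$ with $\dim G_k$. The continuous limit $X$ is (a degeneration of) one of the differential Painlev\'e vector fields; being Hamiltonian it preserves a volume form $v$ on $\widetilde{M}_0$, so $\omega_0=\iota_X v$ restricts to an area form on the surface-fibres of $\widetilde{M}_0\to\Cc_0$ and $Mal(X/\Cc_0)\subseteq\{\varphi:\varphi^\ast\omega_0=\omega_0\}$. For general values of $a$ this is an equality: for $P_{\RM{2}}(\gamma)$ it is \cite{casaleweil} and \cite{davy}, and the analogous statements for the remaining Painlev\'e equations and their confluent degenerations follow likewise from Cartan's generic local classification of algebraic pseudogroups \cite{cartan}, which leaves no transitive pseudogroup strictly between the finite-dimensional or foliation-preserving ones and the full area-preserving one. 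By Darboux's theorem $\omega_0$ on the fibres of $\widetilde{M}_0\to\Cc_0$ and $\omega$ on the fibres of $M\to\Cc$ are locally analytically equivalent, so the two area-preserving pseudogroups have equal jet dimensions and $\dim Mal_k(X/\Cc_0)=\dim G_k$. With the previous display this gives $\dim Mal_k(dP_W(a)/\Cc)\geq\dim G_k$, hence $Mal_k(dP_W(a)/\Cc)=G_k$ for all $k$, hence the theorem; the genericity restrictions on $a$ --- those from Theorem \ref{specialisationthm} and the ``very general parameter'' hypothesis on the limit --- are each the complement of a countable union of proper subvarieties, so can be imposed simultaneously.

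The step I expect to be the main obstacle is the identification $Mal(X/\Cc_0)=\{\varphi^\ast\omega_0=\omega_0\}$: it presupposes the computation of the Galois groupoid of \emph{every} continuous limit of a discrete Painlev\'e equation (the full analogue of \cite{davy}, not merely the $P_{\RM{2}}$ case quoted above), and it is precisely there that Cartan's classification is used in an essential way. The remaining points are bookkeeping: keeping the fibrations $M\to\Cc$ and $\widetilde{M}\to\widetilde{B}\to S$ coherent through the two comparison theorems, translating the genericity in $\epsilon$ back into genericity in $a$, and ensuring that Darboux's theorem and the transitivity of $G$ are applied on the loci where the (compactified, possibly singular) phase spaces are well behaved.
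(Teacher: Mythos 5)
Your skeleton matches the paper's: the inclusion $Mal(dP_W(a)/\Cc)\subseteq Inv(\omega)$ coming from Sakai's relative $2$-form, then Theorems \ref{specialisationthm} and \ref{confluencethm} applied to the confluent family to transport a lower bound from the continuous limit to the discrete equation, then a rigidity statement to turn the bound into an equality. Where you genuinely diverge is in the last step, and the divergence changes what input is required. The paper restricts the discrete groupoid to pairs of points in a single fibre, views $M_{n,n}$ as an algebraic subpseudogroup of the area-preserving pseudogroup $Inv(\omega_n)$ of a surface, and applies Cartan's classification \cite{cartan} on that side: the only subpseudogroup of $Inv(\omega_n)$ whose order-$k$ jets grow quadratically in $k$ is $Inv(\omega_n)$ itself; hence a mere quadratic-growth lower bound on the continuous-limit groupoid suffices, and the statement over all of $\Cc\times\Cc$ is then recovered from the diagonal fibres by a short groupoid argument. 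You instead conclude by irreducibility of $G_k$ and an exact dimension count; this is sound as a scheme, but it forces you to know $\dim Mal_k(X/\Cc_0)=\dim G_k$ at every order, i.e.\ the full identification $Mal(X/\Cc_0)=Inv(\omega_0)$ for \emph{every} continuous limit. You flag this yourself, but your justification --- that it ``follows likewise from Cartan's classification'' --- is not sufficient: Cartan's list only says which transitive pseudogroups can occur; identifying the Galois groupoid among them requires excluding the smaller entries (finite-dimensional pseudogroups, pseudogroups preserving a foliation, etc.), and that exclusion is precisely the hard content of \cite{casaleweil} and \cite{davy}, which the paper quotes only for $P_{\RM{2}}(\gamma)$ with very general $\gamma$. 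The paper's ordering of the argument is designed to avoid exactly this: Cartan is invoked for the discrete groupoid, where the growth estimate furnished by Theorems \ref{specialisationthm} and \ref{confluencethm} is all that is needed, at the modest price of the final diagonal-to-global step. So either cite the full computation of the Galois groupoids of all the relevant differential Painlev\'e equations (to the extent it exists in the literature), or rearrange your final step as the paper does; with that repair, and with the genericity bookkeeping in $(a,\epsilon)$ that you already identify (and which the paper also leaves implicit), your argument goes through.
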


\begin{proof}
First one will determined the codimension $2$ subvariety  given for $n \in \Cc$ by $M_{n,n} = Mal(dP_W(a) / \Cc) \cap   \{ \varphi : (M_{n},p) \to (M_{n},q),\}$. It is a subgroupoid of 
$Inv(\omega_n) =  \{ \varphi : (M_{n},p) \to (M_{n},q) |  \varphi^\ast \omega =  \omega\}
$. The restriction of these pseudogroups on a fiber $M_n$ give algebraic pseudogroups on two dimensional manifold. Such objects have been classified by Elie Cartan \cite{cartan} up to analytic change of coordinates near a generic point. 
From the specialisation theorem we know that $\dim (M_{n,n})_k$, the algebraic variety of jet of order $k$ of element of $M_{n,n}$, has a quadratic growth in $k$. From Cartan results we know that there is only one quadratic growth subgroupoid of  $Inv(\omega_n)$ : it is  $Inv(\omega_n)$ itself. 

Now the two groupoids dominate $\Cc \times \Cc$ and have same fibers above the diagonal, they are equals. 
\end{proof}
%

\bibliographystyle{alpha}
\bibliography{references}

 \end{document}